\documentclass{amsart}

\usepackage{graphicx}

\newtheorem{theorem}{Theorem}[section]
\newtheorem*{RC}{Rademacher's Conjecture}
\newtheorem{conj}[theorem]{Conjecture}

\theoremstyle{remark}
\newtheorem{remark}[theorem]{Remark}

\numberwithin{equation}{section}

\begin{document}

\title[ {Rademacher's Conjecture is} {\tiny (\MakeLowercase{almost certainly})} False]{{\large Rademacher's Infinite Partial Fraction Conjecture
is} {\tiny (\MakeLowercase{almost certainly})} {\large False} }

\author[A. V. Sills]{Andrew V. Sills}
\address{Andrew V. Sills, Department of Mathematical Sciences, Georgia Southern University, 65 Georgia Avenue, Room 3008, Statesboro, Georgia 30458-8093, USA}
\email{ASills@GeorgiaSouthern.edu}
\thanks{A. V. S. thanks the Center for Discrete Mathematics and Theoretical Computer Science (DIMACS) for its hospitality during
his July 2011 stay, which led to this collaboration with D. Z.}

\author[D. Zeilberger]{Doron Zeilberger}
\address{Doron Zeilberger, Department of Mathematics, Rutgers University, Hill Center, 110 Frelinghuysen Road, Piscataway, NJ 08854-8019, USA}
\email{zeilberg@math.rutgers.edu}
\thanks{D. Z. is supported in part by a grant of the National Science Foundation}

\subjclass[2010]{Primary 11Y99}

\date{\today}

\dedicatory{``It is a capital mistake to theorise before one has data.  Insensibly one begins to twist facts to suit theories, instead of theories to suit facts."---Sherlock Holmes to Dr. Watson~\cite[p. 63]{D}.}

\begin{abstract}
 In his book \emph{Topics in Analytic Number Theory}, Hans Rademacher conjectured
that the limits of certain sequences of coefficients that arise in the ordinary partial fraction decomposition
of the generating function for partitions of integers into at most $N$ parts exist and equal
particular values that he specified.  Despite being open for nearly four decades, little progress
has been made toward proving or disproving the conjecture, perhaps in part due to the
difficulty in actually computing the coefficients in question.  
  In this paper, we provide a fast algorithm for calculating the Rademacher coefficients,
a large amount of data, 
direct formulas for certain collections of Rademacher coefficients, and overwhelming evidence
against the truth of the conjecture.  While the limits of the sequences of Rademacher coefficients
do not exist (the sequences oscillate and attain arbitrarily large positive and negative values),
the sequences do get very close to Rademacher's conjectured limits for certain (predictable)
indices in the sequences.
\end{abstract}

\maketitle

\section*{Important Note} This article is accompanied by the Maple package {\tt HANS}, downloadable from \\
\texttt{http://www.math.rutgers.edu/\~{}zeilberg/tokhniot/HANS}  . \\ 

The ``front'' of this article, \\
\texttt{http://www.math.rutgers.edu/\~{}zeilberg/mamarim/mamarimhtml/hans.html}\\
contains lots of supporting input and output files.

\section{Introduction}
Let $p_N(n)$ denote the number of partitions of the integer $n$ into at most $N$ parts.
The generating function of $p_N(n),$
\[ F_N(x):= \sum_{n\geq 0} p_N(n) x^n = \prod_{j=1}^N \frac{1}{1-x^j}, \]
may be decomposed into partial fractions:
\begin{equation} \label{pfd}
 \prod_{j=1}^N \frac{1}{1-x^j} = \sum_{k=1}^N \underset{\gcd(h,k)=1} {\sum_{0\leq h < k } }\sum_{l=1}^{\lfloor N/k \rfloor }
 \frac{C_{h,k,l}(N) }{ (x - e^{2\pi i h/k } )^l}.\end{equation}
We shall refer to the $C_{h,k,l}(N)$ defined by~\eqref{pfd} as the \emph{Rademacher coefficients}.

Near the end of his posthumously published masterpiece \emph{Topics in Analytic
Number Theory}~\cite[p. 302]{R}, Hans Rademacher made the following conjecture:
 
\begin{RC}
For all integers $h,k,l$ such that $0 \leq h<k$, $\gcd(h,k)=1$ and $l \geq 1$,  $\lim_{N\to\infty} C_{h,k,l} (N)$ exists and equals
 \begin{equation}
R_{h,k,l}:= -2\pi \left( \frac{\pi}{12} \right)^{3/2} \frac{ e^{\pi i (s(h,k)+2hl/k)}}{k^{5/2}}
  \Delta^{l-1}_{\alpha} L_{3/2}\left( -\frac{\pi^2}{6k^2} (\alpha+1) \right) , \label{RadConjValue}
\end{equation}
evaluated at $\alpha=\frac{1}{24}$,  where
$s(h,k) = \sum_{\mu=1}^{k-1}\left( \frac{\mu}{k} - \lfloor \frac{\mu}{k} \rfloor - \frac 12\right)
\left( \frac{h\mu}{k} - \lfloor \frac{h\mu}{k} \rfloor - \frac 12\right)$ is the Dedekind sum, 
$\Delta_\alpha$ is the forward difference
operator, so that 
\[ \Delta_\alpha^j f(\alpha) = \sum_{h=0}^j (-1)^h \binom{j}{h} f(\alpha+j-h), \]
and
\[ L_{3/2} (-y^2) = -\frac{1}{2\sqrt{\pi} y^2} \left( 2 \cos (2y) - \frac{\sin (2y)}{y} \right). \]
\end{RC}

If the Rademacher conjecture would have been true then it would have followed that
\[ \lim_{N\to\infty} C_{0,1,1} (N) =  R_{0,1,1} (=- \frac{6}{25}\left( 1 + \frac{2\sqrt{3}}{5\pi} \right) = -0.292927573960\dots), \]
\[ \lim_{N\to\infty} C_{0,1,2} (N) =  R_{0,1,2} (=\frac{144}{1225}+ \frac{5616}{42875\pi} =0.1897670688440\dots) , \]
\[ \lim_{N\to\infty} C_{1,2,1} (N) = R_{1,2,1} (=- \frac{2\sqrt{6}}{25}\left( \cos\frac{5\pi}{12} - \frac{12}{5\pi}\sin\frac{5\pi}{12} \right) = 0.093882853484\dots). \]

\begin{remark} 
The floating-point approximation for the value of $R_{0,1,1}$ stated by
Rademacher~\cite[p. 302]{R} was erroneous, as were the exact values of $R_{0,1,2}$ and $R_{1,2,1}$ (for the latter he gave exactly one half
of the correct value). These erroneous values  were quoted, without correction, by Andrews~\cite[p. 388]{A}.
\end{remark}

Rademacher supplied (with one error) a table of values for $C_{0,1,1}(N)$, $C_{0,1,2}(N)$, and $C_{1,2,1}(N)$ for
$N=1,2,3,4,5$, and in fact these values are not too far off from his conjectured ``$N=\infty$'' cases.

Rademacher began work on the book in which this conjecture appeared~\cite{R} no
later than 1944 and was still working on it at the inception of his final illness.  Thus as the final version was edited and published by Rademacher's students Emil Grosswald, Joseph Lehner, and Morris Newman,
after Rademacher's death in 1969, we will never know whether Rademacher came to doubt the truth of the conjecture after
he had written it down.  However, George Andrews reports that Rademacher discussed 
the conjecture in a course he taught at the University of Pennsylvania during the 1961--1962
academic year.

In~\cite{M}, Augustine Munagi considered a different type of partial fraction decomposition
called $q$-partial fractions, and proved a special case of the analog of the Rademacher
conjecture, relative to the $q$-partial fraction decomposition. 

We should also note that the first to cast doubts on the Rademacher conjecture were Jane Friedman and
Leon Ehrenpreis. Ehrenpreis~\cite[p. 317]{E} stated,  
``If one attempts to carry out the usual type of
partial fraction decomposition of the partition function term-by-term, it is difficult to compute the
coefficients.  
My student, Jane Friedman, spent a great deal of time trying to apply computer algorithm
methods to compare the coefficients with those of Rademacher\dots Unfortunately, the
computer study proved inconclusive.'' 

In this
article, we present overwhelming evidence against this conjecture, taken literally, but we will present ample
evidence for a modified conjecture.    Additionally, we will present a fast algorithm for 
generating the Rademacher coefficients, and formulas for a selection of particular 
Rademacher coefficients.

\section{Empirical evidence against the Rademacher conjecture}
\subsection{The \emph{actual} behavior of the sequences $C_{0,1,l}(N)$, $l = 1,2,3,\dots$}

 At\\
 \texttt{http://www.math.rutgers.edu/\~{}zeilberg/mamarim/mamarimhtml/hans.html}     ,\\
 there are links to various files including
 \begin{itemize}
   \item the sequences $C_{0,1,l}(N)$ for $1\leq l \leq 10$ and $1\leq N \leq 850$, in Maple-readable format,
  \item the sequences of floating point approximations to $C_{0,1,l}(N)$ for $1\leq l \leq 40$ and $1\leq N \leq 1000$, in Maple-readable format.
   \end{itemize}

Figures~\ref{C011-100} and~\ref{C011-200} are graphical summaries of $C_{0,1,1}(N)$.
 \begin{center}
  \begin{figure}[h] 
  \includegraphics[scale=1.00]{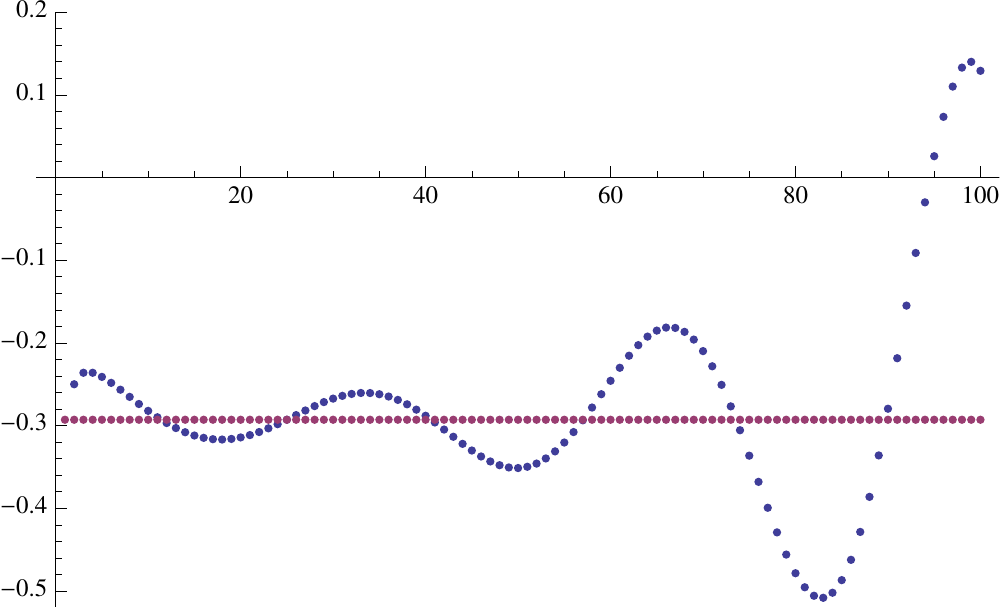}
 \caption{Graph of $C_{0,1,1}(N)$ for $N$ from $1$ to
 $100$, together with the line $y=R_{0,1,1}$ }
  \label{C011-100}
\end{figure}
 
 \begin{figure}[h] 
  \includegraphics[scale=1.00]{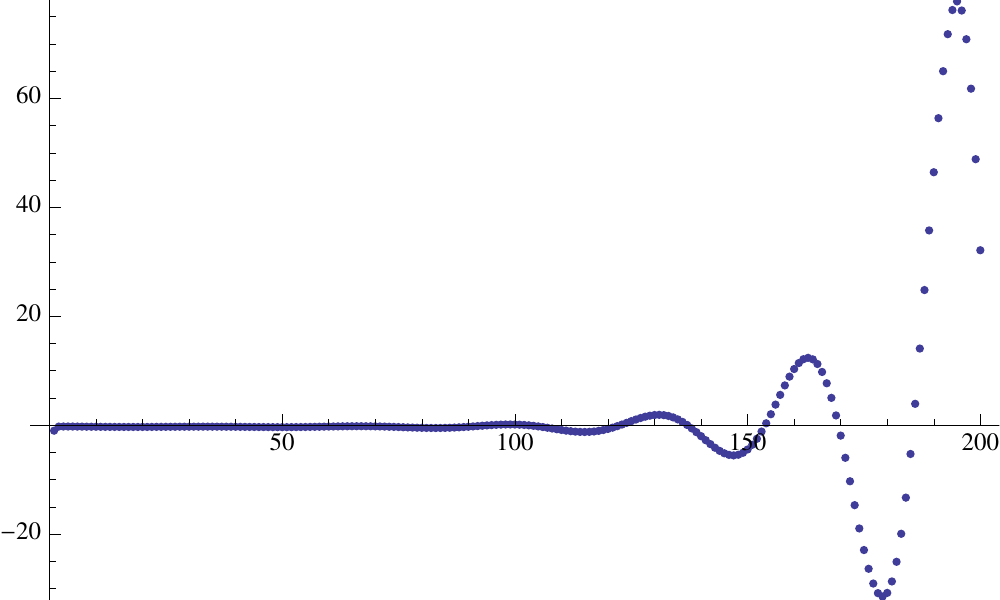}
\caption{Graph of $C_{0,1,1}(N)$ for $N$ from $1$ to
 $200$. }
 \label{C011-200}
\end{figure}
\end{center}

Note that $C_{0,1,1}(25)$ differs from $R_{0,1,1}$, Rademacher's conjectured value of
$C_{0,1,1}(\infty)$, by less than $0.000032$, but then things go down hill (for
the conjecture) from there, and get particularly bad after about $n=150$.
Numerical evidence points to the sequence $C_{0,1,1}(N)$ oscillating
and attaining arbitrarily large positive and negative values.
The same thing is true for other sequences $C_{h,k,l}(N)$; see the output file at \\
\texttt{http://www.math.rutgers.edu/\~{}zeilberg/tokhniot/oHANS11}        .
\\

By examining the graphs in Figures~\ref{C011-100} and~\ref{C011-200} and the associated numerical
data, it seems reasonable to
state the following alternative conjecture:
\begin{conj}
 $C_{0,1,1}(N)$ is an oscillating function of $N$ of ``period" $32$, with local maxima (resp. 
 local minima) that attain arbitrarily large positive (resp. negative) values as $N$ increases.
 \end{conj}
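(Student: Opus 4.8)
The plan is to replace $C_{0,1,1}(N)$ by an exact, analysis-friendly formula and then subject it to a singularity analysis. Since $C_{0,1,1}(N)$ is the coefficient of $(x-1)^{N-1}$ in $(x-1)^N F_N(x)=\prod_{j=1}^N\frac{x-1}{1-x^j}$ and $\frac{x-1}{1-x^j}=\frac{-1}{1+x+\cdots+x^{j-1}}=\frac{-t}{(1+t)^j-1}$ after the substitution $t=x-1$, a one-line computation from \eqref{pfd} gives
\[
C_{0,1,1}(N)=(-1)^N\,[t^{N-1}]\,\prod_{j=1}^N\frac{t}{(1+t)^j-1},
\]
equivalently $C_{0,1,1}(N)=\frac{1}{2\pi i}\oint_{|u-1|=\rho}\frac{du}{\prod_{j=1}^N(1-u^j)}$ for $0<\rho<2\sin(\pi/N)$; deforming that contour outward past the remaining unit-circle poles instead gives the dual identity $C_{0,1,1}(N)=-\sum_{k=2}^N\sum_{\gcd(h,k)=1}\operatorname{Res}_{u=e^{2\pi i h/k}}F_N(u)$, whose summands are, up to normalisation, the other Rademacher coefficients $C_{h,k,1}(N)$. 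I would work with $H_N(t):=\prod_{j=1}^N t/((1+t)^j-1)$: it is analytic at $0$ with $H_N(0)=1/N!$, and its singularities are poles at the points $\omega_j^m-1$ ($\omega_j=e^{2\pi i/j}$, $1\le m<j$), the pole at $\omega_j^m-1$ having order $\lfloor N\gcd(m,j)/j\rfloor$.

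Next I would read the large-$N$ behaviour of $[t^{N-1}]H_N(t)$ off the local expansions at these poles, taken in increasing order of modulus $2|\sin(\pi m/j)|$: a pole of order $q$ at $t_0=\omega_j^m-1$ contributes $(-1)^q\binom{N+q-2}{q-1}\,t_0^{-(N+q-1)}$ times an algebraic amplitude, and the poles nearest the origin are the $\omega_j^{\pm1}-1$ with $j$ near $N, N/2, N/3,\dots$. In the leading band $N/2<j\le N$, $\gcd(m,j)=1$ (simple poles), the amplitude telescopes — the factor $(\omega_j^m-1)^N$ cancels and one finds that $\omega_j^m-1$ contributes $\dfrac{(-1)^{N+j}\,\omega_j^m}{\,j^2\prod_{r=1}^{N-j}(\omega_j^{mr}-1)}$ to $(-1)^N[t^{N-1}]H_N(t)$. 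Summing over all poles I expect a non-oscillating piece equal to $R_{0,1,1}$ (so that Rademacher's value re-emerges as the \emph{would-be} limit) together with a hierarchy of oscillating corrections, the group attached to pole-order $q$ carrying — after the analogue of the above cancellation — a prefactor polynomial in $N$ of degree growing with $q$, and a phase built from Gauss sums and Dedekind sums evaluated at arguments depending on $N$. A stationary-phase evaluation of the inner sums (over $j$, equivalently over $s=N-j$) should convert that phase into an explicit quasi-periodic function of $N$ whose dominant harmonic accounts for the empirically observed ``period'' $32$. I would not aim at exact periodicity — the quotation marks in the conjecture already warn that the wavelength only approximates $32$ over the accessible range and presumably drifts — but rather at a statement that consecutive local extrema are spaced by roughly $32$ there.

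The genuine difficulty is the third step: showing that these oscillating corrections are not annihilated by cancellation, i.e. that the polynomial prefactors really survive, so that $|C_{0,1,1}(N)|\to\infty$ along some subsequence. This is the same obstruction that has kept Rademacher's own conjecture open — the sums in play are grossly oversized and nearly cancel — so proving that anything at all is left, let alone something unbounded, amounts to controlling the arguments $\arg\prod_{r=1}^{N-j}(\omega_j^{mr}-1)$ uniformly in $j$ and $N$, an equidistribution/Diophantine question about partial products of roots of unity rather than a routine estimate. I would therefore pursue growth before periodicity: a realistic first milestone is to bound $\sum_{N\le X}C_{0,1,1}(N)^2$ from below (which forces $|C_{0,1,1}(N)|$ unbounded once paired with an upper bound on $|C_{0,1,1}(N+1)-C_{0,1,1}(N)|$), or to isolate one explicit family of high-order poles — say those at primitive roots near a fixed small angle — and prove that its contribution outgrows the sum of all the rest. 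The ``good-index'' phenomenon (the predictable subsequence of $N$ along which $C_{0,1,1}(N)$ returns close to $R_{0,1,1}$) and the precise value $32$ of the wavelength I would treat as refinements once the unboundedness is in hand.
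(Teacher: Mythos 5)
The statement you are trying to prove is labelled a \emph{Conjecture} in the paper, and the paper offers no proof of it at all: the only support given is numerical (a fast recurrence for the $D_r(N)$, exact values out to $N\approx 850$--$1000$, graphs, and lists of the observed local extrema and of ratios of consecutive maxima, which hover near $8$). So there is no argument of the authors' to compare yours against, and the real question is whether your proposal closes the gap the paper leaves open. It does not. Your algebraic setup is correct and worth recording: $C_{0,1,1}(N)=(-1)^N[t^{N-1}]\prod_{j=1}^N t/((1+t)^j-1)$ follows from reading off the coefficient of $(x-1)^{N-1}$ in $(x-1)^N F_N(x)$ (this is exactly the paper's $G_N$, and your $H_N$ is $(-1)^N G_N$); the residue identity $C_{0,1,1}(N)=-\sum_{k\ge 2}\sum_h C_{h,k,1}(N)$ is the statement that a proper rational function has residue sum zero; and your telescoped formula for the simple poles in the band $N/2<j\le N$ checks out, using $\prod_{r=1}^{j-1}(\omega_j^{mr}-1)=(-1)^{j-1}j$. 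Since $H_N$ is a proper rational function, the sum over principal parts is even an exact identity, not just an asymptotic one.

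The gap is that every assertion of the conjecture --- unboundedness of the extrema, their signs, and the spacing $\approx 32$ --- lives in precisely the step you defer. Even after your telescoping, the individual terms of the exact pole sum are exponentially large in $N$ (the term with $N-j\approx N/(2\pi)$, $m=1$ has modulus roughly $e^{N/(2\pi)}$), while the data indicate $|C_{0,1,1}(N)|$ grows like $8^{N/32}$, a strictly smaller exponential; so the oscillating ``corrections'' are not corrections to anything, and the claim that ``the polynomial prefactors really survive'' is not a perturbative statement but the whole theorem. Your proposed fallback --- a lower bound on $\sum_{N\le X}C_{0,1,1}(N)^2$ paired with a bound on consecutive differences --- faces the same obstruction, since any lower bound on the mean square must already see through the cancellation among the $\arg\prod_{r=1}^{N-j}(\omega_j^{mr}-1)$, the equidistribution problem you yourself flag as open. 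Note also that identifying the ``non-oscillating piece'' with $R_{0,1,1}$ is essentially Rademacher's conjecture in disguise and is likewise unproven. In short: you have built a correct and potentially useful exact framework, and your diagnosis of where the difficulty lies is accurate, but nothing in the proposal establishes oscillation, unboundedness, or the period $32$; the statement remains exactly as conjectural as the paper leaves it.
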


By ``period $32$" we mean that the peaks and valleys,  eventually, recur at a period of $32$. We also
noticed, numerically, that the elevations and depths of successive peaks and valleys roughly grows
exponentially with a factor around $8$.  Specifically, $C_{0,1,1}(N)$ has local maxima at
$N=3,4,33,66,99,131,163,195,227,259,291,323,$\\
$355,387,419, 451, 483, 515, 547, 579, 611, 643,
675, 707, 739,771, \dots$.  The ratio of consecutive local maxima is
\begin{multline*}
\{ 1, 1.103504574, 0.6965131681, -0.7709983810, 13.63072659, 6.485614677, \\6.289519948,
    6.547018652, 6.785098547, 6.992410281, 7.161220864, 7.301859590,\\
    7.420337150, 7.521483398, 7.608822684, 7.684977203, 7.751953124,\\
    7.811301903, 7.864245038, 7.911756412, 7.954622120, 7.993483579,\\
    8.028869316, 8.061218737, 8.090900135, \dots \} \end{multline*}
 $C_{0,1,1}(N)$ has a local minimum half way between
each local maximum, at 
$N=18,50,83,115,147,179,211,243,275,307,339,371,403,435,467,499,531,563,595,\\
627,659,691,723,755,787,\dots$.
\vskip 3mm

More precise conjectured asymptotics for $C_{0,1,l}(N)$ for $l$ between $1$ and $15$ can be gotten from

\texttt{http://www.math.rutgers.edu/\~{}zeilberg/tokhniot/oHANS10} \quad . \\

It appears that all $C_{0,1,l}(N)$ have a ``period'' of $32$. The locations of the
local maxima and minima of $C_{0,1,l}(N)$, from $N=99$ until $N=803$
occur when $N \equiv 3\pmod{32} $ and $N \equiv 19\pmod {32}$ respectively.  For $N>803$, they
become $N \equiv 2\pmod{32}$ and $N \equiv 18\pmod{32}$ respectively. This gives (very meager) evidence of a ``shifting of the perihelion'',
but one would need to go much further to investigate this.
More generally, for $C_{0,1,l}(N)$, and for $100 \leq N \leq 800$ these locations are congruent to $12-9l \pmod {32}$, 
and $28-9l \pmod{32}$
respectively. Probably these too would eventually get shifted (ever so slowly).

\subsection{How to compute the sequences $C_{0,1,l}(N)$ fast} 
If you use the {\it definition} of $C_{0,1,1}(N)$, or Andrews's formula~\cite[p. 388, Theorem 1]{A}, you can't go very far, even with Maple.
Rademacher calculated $C_{0,1,1}(N)$ for $N=1,2,3,4,5$, presumably by hand, and made
an error in the $N=5$ case.
Andrews~\cite[p. 388]{A}, who had access to a computer algebra system in 2003, 
corrected Rademacher's error at $N=5$ and extended the
list to $N=6,7,8$.

We need to be more clever. A fast recurrence for $C_{0,1,l}(N)$ can be derived as follows.
Since
$$
\prod_{j=1}^N \frac{1}{1-x^j}=\sum_{l=1}^{N} \frac{C_{0,1,l}(N)}{(x-1)^l} + \dots  \quad ,
$$
we can multiply both sides by $(x-1)^N$ and get
$$
(x-1)^N\prod_{j=1}^N \frac{1}{1-x^j}=\sum_{r=0}^{N-1} D_r(N){(x-1)^r} + \dots  \quad .
$$
Once we know $D_r(N)$, we can find $C_{0,1,l}(N)$, since they equal $D_{N-l}(N)$. It remains to
find a fast recurrence for $D_r(N)$. \\

By definition, we have:
$$
\frac{1-x^N}{x-1} \left ( \sum_{r=0}^{\infty} D_r(N){(x-1)^r} \right )=
 \sum_{r=0}^{\infty} D_r(N-1){(x-1)^r} \quad.
$$
Letting $z=x-1$, this is:
$$
\frac{1-(z+1)^N}{z} \left ( \sum_{r=0}^{\infty} D_r(N){z^r} \right )=
 \sum_{r=0}^{\infty} D_r(N-1){z^r} \quad.
$$
By the binomial theorem,
$$
- \left ( \sum_{a=0}^{N-1} \binom{N}{a+1} z^a \right ) \left ( \sum_{r=0}^{\infty} D_r(N){z^r} \right )=
 \sum_{r=0}^{\infty} D_r(N-1){z^r} \quad.
$$
Equating coefficients of $z^r$ we get:
$$
ND_r(N)+\sum_{a=1}^{r} \binom{N}{a+1} D_{r-a}(N)= -D_r(N-1) \quad .
$$
And finally:
$$
D_r(N)=-\frac{D_r(N-1)}{N}-\sum_{a=1}^{r} \frac{1}{N}\binom{N}{a+1} D_{r-a}(N) \quad .
$$
This is implemented in procedure {\tt C01(l,N)} of {\tt HANS}.

The same argument leads to efficient recurrences for $C_{h,k,l}(N)$, except that now we have
to distinguish between the case when $N$ is divisible by $k$ and when it is not, yielding
two different recurrences. This is implemented in procedure {\tt ChklN(h,k,l,N)} of {\tt HANS}. \\

\subsection{$C_{1,2,1}(N)$}

The values of $C_{1,2,1}(N)$ for $1\leq N\leq 700$, in both exact rational form and
approximate floating point form are provided at

\texttt{http://www.math.rutgers.edu/\~{}zeilberg/tokhniot/oHANS3}     .

Graphical summaries are provided in Figures~\ref{C121-150} and~\ref{C121-300}.
As the graphs make clear, it is better to regard $C_{1,2,1}(N)$ as two separate subsequences,
$C_{1,2,1}(2n)$ and $C_{1,2,1}(2n+1)$.  For each subsequence, we see behavior that is
similar to that of $C_{0,1,l}(N)$.

 \begin{figure}[h] 
 \includegraphics[scale=1.00]{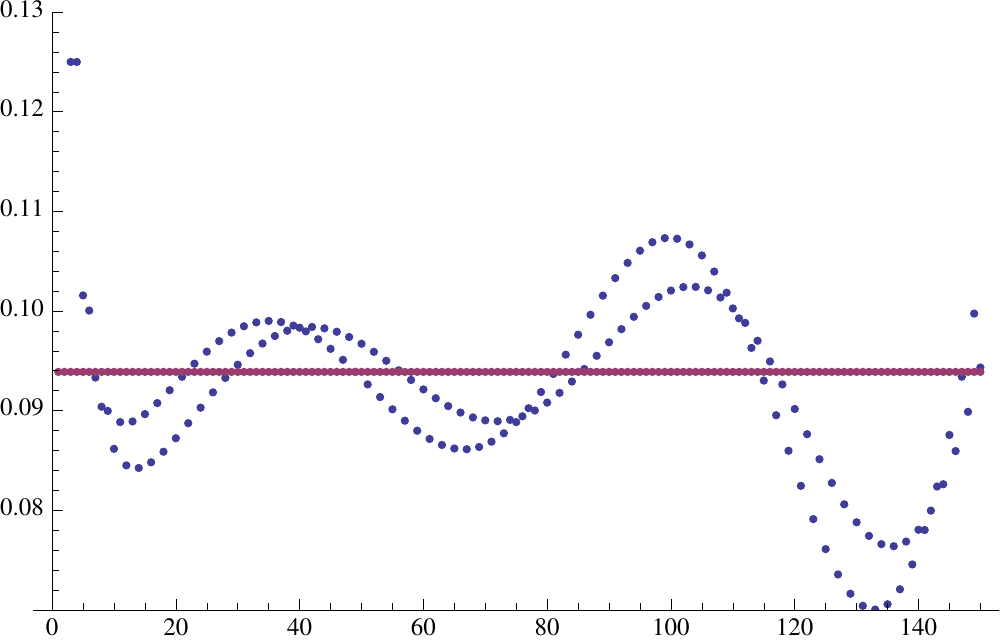}
 \caption{Graph of $C_{1,2,1}(N)$ for $N$ from $1$ to
 $150$, together with the line $y=R_{1,2,1}$ }
  \label{C121-150}
\end{figure}

 \begin{figure}[h!] 
 \includegraphics[scale=1.00]{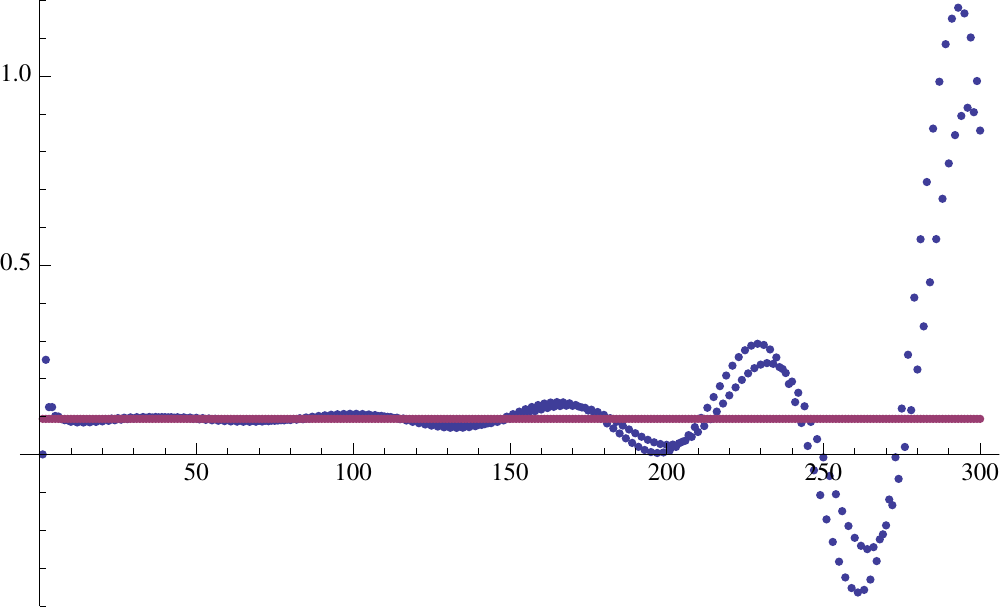}
 \caption{Graph of $C_{1,2,1}(N)$ for $N$ from $1$ to
 $300$, together with the line $y=R_{1,2,1}$ }
 \label{C121-300}
\end{figure}

\section{``Top down" formulas for the Rademacher Coefficients}
\subsection{$C_{0,1,l}(N)$}

As Rademacher already pointed out, it seems hopeless to get a closed-form formula for
$C_{h,k,l}(N)$ for $l=1,2, \dots$, {\it but} if you work your way down from the ``top'',
one can conjecture, and then {\it rigorously} prove explicit formulas for
$C_{h,k,N-r}(N)$, that alas, get increasingly more complicated as $r$ gets larger.

\begin{conj} \label{C01topdown}
 \[ C_{0,1,N-r}(N)  =  \frac{(-1)^{N+r}}{4^r N! r!} P_{0,1,N-r} (N), \]
 where, for $r>0$, $P_{0,1,N-r}(N)$ is a convex, alternating, monic polynomial of degree $2r$ whose
 only real roots are $0$ and $1$.
\end{conj}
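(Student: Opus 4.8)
The plan is to recast everything through the quantities $D_r(N)$ introduced in \S2.2, where $C_{0,1,N-r}(N)=D_r(N)$ and
\[
D_r(N)=-\frac{D_r(N-1)}{N}-\sum_{a=1}^{r}\frac{1}{N}\binom{N}{a+1}D_{r-a}(N),\qquad D_0(N)=\frac{(-1)^N}{N!}.
\]
The substitution $w_r(N):=(-1)^{N}N!\,D_r(N)$ clears the factorials (using $\tfrac1N\binom{N}{a+1}=\tfrac1{a+1}\binom{N-1}{a}$) and yields the cleaner
\[
w_r(N)-w_r(N-1)=-\sum_{a=1}^{r}\frac{1}{a+1}\binom{N-1}{a}\,w_{r-a}(N),\qquad w_0\equiv 1 .
\]
Comparing with the conjectured formula, the assertion to prove is equivalent to: for $r\ge1$, $w_r$ is a polynomial in $N$ of degree $2r$, and $P_r(N):=(-1)^r4^rr!\,w_r(N)$ ($=P_{0,1,N-r}(N)$) is monic --- i.e. $w_r$ has leading coefficient $(-1)^r/(4^rr!)$ --- alternating, convex, with $0$ and $1$ as its only real zeros.

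First I would settle polynomiality, degree, and leading coefficient by strong induction on $r$. Assuming the claim below $r$, each summand $\tfrac1{a+1}\binom{N-1}{a}w_{r-a}(N)$ is a polynomial in $N$ of degree $a+2(r-a)=2r-a$, and only $a=1$ attains $2r-1$; hence the right-hand side is a polynomial of degree exactly $2r-1$ whose leading coefficient is immediate. The backward difference $f(N)\mapsto f(N)-f(N-1)$ lowers degree by one and every polynomial is such a difference, the ambiguity being an additive constant, so $w_r$ is forced to be a polynomial of degree $2r$; comparing leading terms fixes its leading coefficient at $(-1)^r/(4^rr!)$. The remaining constant is pinned by one boundary value: multiplying $F_0\equiv1$ and $F_1=1/(1-x)$ by $(x-1)^N$ gives $D_r(0)=D_r(1)=0$ for $r\ge1$, i.e. $w_r(0)=w_r(1)=0$, so $N(N-1)$ divides $P_r(N)$ and $P_r(N)=N(N-1)Q_r(N)$ with $Q_r$ monic of degree $2r-2$. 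Only the three ``shape'' properties then remain.

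For those I would continue the induction after the reflection $N\mapsto-N$. Writing $\omega_r(N):=(-1)^rw_r(-N)=P_r(-N)/(4^rr!)$ and using $\binom{-N-1}{a}=(-1)^a\binom{N+a}{a}$, the recurrence becomes
\[
\omega_r(N+1)-\omega_r(N)=\sum_{a=1}^{r}\frac{1}{a+1}\binom{N+a}{a}\,\omega_{r-a}(N),
\]
whose right-hand side has nonnegative coefficients (as a polynomial in $N$) as soon as the $\omega_{r-a}$ do, since $\binom{N+a}{a}=(N+1)(N+2)\cdots(N+a)/a!$ has nonnegative coefficients and coefficientwise nonnegativity passes to sums and products. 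Now ``$P_r$ alternating'' is exactly ``$\omega_r$ has nonnegative coefficients'', and granting that, differentiating $P_r(-N)=4^rr!\,\omega_r(N)$ twice yields $P_r''\ge0$ on $(-\infty,0]$; proving alongside it that $P_r(1+N)$ also has nonnegative coefficients yields $P_r''\ge0$ on $[1,\infty)$; and convexity on the bounded interval $[0,1]$ would follow from one more positivity invariant carried through the induction, the natural candidate being nonnegativity of the Bernstein coefficients of $P_r''$ on $[0,1]$ (which forces $P_r''\ge0$ there, and Bernstein bases multiply with nonnegative structure constants). Putting the three pieces together gives $P_r''\ge0$ everywhere, so $P_r$ is convex; a convex polynomial that is not identically zero has at most two real zeros, so $0$ and $1$ are its only ones and $Q_r>0$.

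The crux --- and the step I expect to be the real obstacle --- is the induction for these positivity invariants, because the difference operator does not \emph{reflect} coefficientwise nonnegativity: from ``$\omega_r(N+1)-\omega_r(N)$ has nonnegative coefficients and $\omega_r$ vanishes at $0$ and $-1$'' one cannot in general conclude ``$\omega_r$ has nonnegative coefficients'' (already $N^2-N$, whose forward difference is $2N$, is a counterexample). So the proof must exploit the arithmetic of the forcing term --- its summands are $\binom{M+a}{a}$ times lower $\omega$'s, and $\sum_{M=0}^{N-1}\binom{M+a}{a}=\binom{N+a}{a+1}$ is again coefficientwise nonnegative --- to find a single polynomial basis in which each $\omega_{r-a}$ has nonnegative coordinates, that is simultaneously closed under the products on the right, closed under the partial summation $\sum_{M=0}^{N-1}(\cdot)$, and maps to nonnegative ordinary coefficients; producing such a basis, and checking the initial data lie in the corresponding cone, uniformly in $r$, is the hard part. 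Short of that, one proves the shape properties case by case in the manner the text anticipates: for each fixed $r$ the recurrence outputs $P_r(N)$ explicitly, and its $O(r)$ coefficients are then checked directly for the alternation of signs, for convexity (say by a negative-discriminant or Sturm argument on $P_r''$), and for the location of the real zeros.
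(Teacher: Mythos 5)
First, a point of orientation: the statement you were asked to prove is labelled a \emph{Conjecture} in the paper, and the paper does not prove it. What the paper proves (Theorem~\ref{P01topdown}) is the list of explicit formulas for $P_{0,1,N-r}(N)$ for $r\le 4$, obtained by comparing coefficients of $(x-1)^j$ in $(1-x^N)G_N=(x-1)G_{N-1}$ and solving the resulting first-order recurrences in $N$ one value of $r$ at a time --- which is, up to normalization, exactly your recurrence for $w_r$. Your first half goes strictly further than the paper does: the induction showing that the right-hand side $-\sum_{a=1}^{r}\frac{1}{a+1}\binom{N-1}{a}w_{r-a}(N)$ has degree exactly $2r-1$ (only $a=1$ attains it), that the backward difference is surjective onto polynomials of one lower degree with kernel the constants, that the leading coefficient is forced to be $(-1)^r/(4^r r!)$, and that the free constant is pinned by $D_r(0)=D_r(1)=0$ (so $N(N-1)\mid P_r$) is correct and uniform in $r$. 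That is a legitimate theorem --- the ``weak form'' of the conjecture: $C_{0,1,N-r}(N)=\frac{(-1)^{N+r}}{4^rN!r!}P_r(N)$ with $P_r$ monic of degree $2r$ vanishing at $0$ and $1$ --- and it is worth recording, since the paper only asserts that formulas ``may be given for any specific $r$.''

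However, the proposal does not prove the statement as given, and you say so yourself: the properties ``alternating,'' ``convex,'' and ``$0$ and $1$ are the \emph{only} real roots'' are left open. The gap you identify is real and is precisely where the difficulty lies: the forward/backward difference operator does not reflect coefficientwise nonnegativity, so knowing that $\omega_r(N+1)-\omega_r(N)=\sum_{a\ge1}\frac{1}{a+1}\binom{N+a}{a}\omega_{r-a}(N)$ has nonnegative coefficients, together with $\omega_r(0)=\omega_r(-1)=0$, does not yield nonnegativity of the coefficients of $\omega_r$ (your example $N^2-N$ versus its difference $2N$ is exactly the failure mode). The proposed remedy --- a polynomial cone closed under multiplication by $\binom{N+a}{a}$, under partial summation, and mapping into nonnegative ordinary coefficients --- is only a candidate strategy; no such cone is produced, and the fallback of verifying the shape properties for each fixed $r$ establishes only finitely many cases, which is no more than the paper's own position. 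So the net assessment is: a correct and somewhat stronger-than-the-paper proof of the structural half of the conjecture, and an honestly flagged but genuine gap for the positivity/convexity/root-location half, which therefore remains a conjecture.
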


\begin{theorem}  \label{P01topdown}Explicit formulas for the $P_{0,1,N-r}(N)$ of Conjecture~\ref{C01topdown} may
be given for any specific $r$,
in particular, we have
 \begin{equation} \label{P01N} P_{0,1,N}(N) = 1 \end{equation}
\begin{equation} \label{P01N-1}  P_{0,1,N-1}(N)= N^2 - N \end{equation}
\begin{equation} \label{P01N-2} P_{0,1,N-2}(N)= N^4-\frac{22 N^3}{9}+\frac{13 N^2}{3}-\frac{26 N}{9} \end{equation}
 \begin{equation} \label{P01N-3} P_{0,1,N-3}(N)=N^6-\frac{13 N^5}{3}+\frac{43 N^4}{3}-25 N^3+\frac{98 N^2}{3}-\frac{56 N}{3} \end{equation}
   \begin{multline}
   P_{0,1,N-4}(N) = N^8-\frac{20 N^7}{3}+\frac{862 N^6}{27}-\frac{21104 N^5}{225}+\frac{29039
   N^4}{135}-\frac{14548 N^3}{45} \\+\frac{9892 N^2}{27}-\frac{42896 N}{225}
   \end{multline}
 \end{theorem}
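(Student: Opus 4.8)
The plan is to exploit the fast recurrence for $D_r(N)$ derived in Section~2.2, where $D_r(N)=C_{0,1,N-r}(N)$. I define
\[
P_{0,1,N-r}(N)\;:=\;(-1)^{N+r}\,4^{\,r}\,N!\,r!\;D_r(N),
\]
so that Theorem~\ref{P01topdown} reduces to showing that, for each fixed $r$, this quantity is a polynomial in $N$ (automatically monic of degree $2r$), after which one simply computes it for $r=0,1,2,3,4$. Everything will flow from transporting the recurrence
\[
D_r(N)=-\frac{D_r(N-1)}{N}-\sum_{a=1}^{r}\frac{1}{N}\binom{N}{a+1}D_{r-a}(N)
\]
through this normalization.

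Abbreviating $P_r(N):=P_{0,1,N-r}(N)$ and substituting $D_s(N)=\dfrac{(-1)^{N+s}}{4^{s}N!\,s!}P_s(N)$ (for $s=r$ and $s=r-a$) into the recurrence, then clearing the common factor $\dfrac{(-1)^{N+r}}{4^{r}N!\,r!}$, the identities $N\cdot(N-1)!=N!$ and $(-1)^{N+r-1}(-1)^{N+r}=-1$ turn $-D_r(N-1)/N$ into $P_r(N-1)$, while inside the sum $\dfrac1N\binom{N}{a+1}=\dfrac1{(a+1)!}\prod_{j=1}^{a}(N-j)$, together with $\dfrac{r!}{(r-a)!}=\binom{r}{a}a!$, $\dfrac{a!}{(a+1)!}=\dfrac1{a+1}$, and the residual powers of $-1$ and $4$ (which combine to $(-4)^a$), yields
\[
P_r(N)\;=\;P_r(N-1)\;-\;\sum_{a=1}^{r}\frac{(-4)^{a}}{a+1}\binom{r}{a}\Bigl(\prod_{j=1}^{a}(N-j)\Bigr)P_{r-a}(N).
\]
The crucial structural point is that $P_r$ itself appears on the right only through the single backward shift $P_r(N-1)$; all other terms carry strictly smaller index $r-a$. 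Viewed as a first-order inhomogeneous recurrence in $N$, it therefore determines $P_r$ from $P_0,\dots,P_{r-1}$ together with one initial value, supplied by $D_r(0)=0$ for $r\ge1$ (immediate from $(x-1)^0F_0=1$), i.e.\ $P_r(0)=0$.

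The rest is an induction on $r$. Base case $r=0$: the recurrence reads $D_0(N)=-D_0(N-1)/N$ with $D_0(0)=1$, whence $D_0(N)=(-1)^N/N!$ and $P_0\equiv1$, which is \eqref{P01N}. Inductive step: assume $P_0,\dots,P_{r-1}$ are monic polynomials with $\deg P_s=2s$. Then $g_r(N):=P_r(N)-P_r(N-1)$, which equals $-\sum_{a=1}^{r}\frac{(-4)^a}{a+1}\binom{r}{a}\bigl(\prod_{j=1}^a(N-j)\bigr)P_{r-a}(N)$, is a polynomial whose $a$-th summand has degree $a+2(r-a)=2r-a$; the top degree $2r-1$ occurs only at $a=1$, where the summand is $2r\,(N-1)P_{r-1}(N)$, so $g_r$ has degree exactly $2r-1$ with leading coefficient $2r$. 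Telescoping with $P_r(0)=0$ gives $P_r(N)=\sum_{m=1}^{N}g_r(m)$, a polynomial in $N$ of degree $2r$ with leading coefficient $2r/(2r)=1$. Hence the procedure always terminates in a monic degree-$2r$ polynomial; executing it — $g_1(N)=2N-2$ gives $P_1(N)=N^2-N$, then $g_2,g_3,g_4$ in turn — produces \eqref{P01N-1}, \eqref{P01N-2}, \eqref{P01N-3} and the $r=4$ formula, each of which may also be checked independently against the output of {\tt C01(l,N)}.

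I do not anticipate a genuine obstacle: the only step requiring care is the constant-chasing that collapses the normalized recurrence into the tidy form above, after which the degree and monicity claims are immediate. As byproducts, the argument proves the ``monic polynomial of degree $2r$'' assertion of Conjecture~\ref{C01topdown}, and — since additionally $D_r(1)=0$ for $r\ge1$ (from $(x-1)F_1=-1$), so $P_r(1)=0$ — it confirms that $0$ and $1$ are indeed roots of $P_{0,1,N-r}(N)$. The remaining assertions of Conjecture~\ref{C01topdown} (convexity, the alternating signs of the coefficients, and the absence of other real roots) lie outside the reach of this method and would require a separate, apparently considerably harder, analysis.
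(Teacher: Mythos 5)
Your proof is correct, and it rests on the same engine as the paper's: the functional equation $(1-x^N)G_N=(x-1)G_{N-1}$ for $G_N=(x-1)^N F_N(x)$, expanded about $x=1$ and read off coefficient-by-coefficient to give a first-order recurrence in $N$ for each fixed depth $r$ from the top. The difference is in how that recurrence is then exploited. The paper compares coefficients of $(x-1)^1,(x-1)^2,(x-1)^3$ separately, solves each resulting recurrence ad hoc with an initial value at $N=1$, and dismisses larger $r$ with ``results follow analogously''; in particular it never proves that the answer is of the form $\frac{(-1)^{N+r}}{4^rN!\,r!}$ times a monic polynomial of degree $2r$ --- that structure is left as part of Conjecture~\ref{C01topdown}. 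You instead transport the $D_r(N)$ recurrence through the normalization $P_r(N)=(-1)^{N+r}4^rN!\,r!\,D_r(N)$ once and for all, obtaining $P_r(N)=P_r(N-1)+g_r(N)$ with $g_r$ built from $P_0,\dots,P_{r-1}$, and then a single induction on $r$ (telescoping from $P_r(0)=0$, with the leading term $2r(N-1)P_{r-1}(N)$ controlling the degree) shows \emph{uniformly in $r$} that $P_r$ is a monic polynomial of degree $2r$ vanishing at $0$ and $1$. I checked the constant-chasing: the factor $\frac1N\binom{N}{a+1}=\frac{1}{(a+1)!}\prod_{j=1}^a(N-j)$ together with $\frac{r!}{(r-a)!}=\binom{r}{a}a!$ and the sign/power-of-$4$ bookkeeping does yield your $\frac{(-4)^a}{a+1}\binom{r}{a}$, and executing the scheme reproduces \eqref{P01N-2} exactly. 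So your argument buys strictly more than the paper's: besides the listed formulas, it settles the polynomiality, degree, monicity, and the roots $0,1$ asserted in Conjecture~\ref{C01topdown}, leaving open only convexity, the alternation of signs, and the absence of further real roots, as you correctly note.
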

 
 \begin{remark}
 Readers desiring formulas for $C_{0,1,N-r}(N)$ for $r>4$ are directed to the
\texttt{ChkFormula} procedure in the
 \texttt{HANS}
Maple package.
 \end{remark}

\begin{remark}
From the above (and additional data not reproduced here but available at the website), we may deduce that
\begin{multline*}
   P_{0,1,N-r}(N) = N^{2r} - \frac{2r^2+7r}{9} N^{2r-1} + \frac{ 4r^4 + 12r^3 + 287r^2 - {303}r }{2\cdot 9^2} N^{2r-2} 
   \\- \frac{ 200r^6 - 300 r^5 + 40706 r^4 + 42939 r^3 - 257509 r^2 + 173964 r}{150\cdot 9^3} N^{2r-3} \\+ \mbox{ lower degree terms.} 
  \end{multline*}
   For $s>1$, the coefficients of $N^{2r-s}$ again appear to be polynomials in $r$ of degree $2s$, whose real roots include $0$ and $1$, although they are not convex and may have additional real roots.
\end{remark}

 \begin{proof}[Proof of Theorem~\ref{P01topdown}]
 Define $G_N := (x-1)^N F_N(x).$  Then $G_N$ has a Taylor series expansion about $x=1$, whose first $N$
 coefficients are the Rademacher coefficients:
 \[ G_N = \sum_{j=0}^{N-1} C_{0,1,N-j}(N)  (x-1)^j + \mbox{ higher degree terms }. \]
 Clearly, \begin{equation} \label{Grec} (1-x^N) G_N = (x-1) G_{N-1}. \end{equation}
 Expanding $(1-x^N)$ on the left hand side of~\eqref{Grec} as a Taylor polynomial about $x=1$, we have
 \begin{multline} \label{C011rec}
  \left( -\sum_{j=1}^N \binom{N}{j} (x-1)^j \right) \left( \sum_{j=0}^{N-1} C_{0,1,N-j}(N)  (x-1)^j + \mbox{ higher degree terms } 
  \right)
   \\ = \sum_{j=1}^{N-1} C_{0,1,N-j}(N-1) (x-1)^j + \mbox{higher degree terms}
 \end{multline}
 
Comparing the coefficients of $(x-1)^1$ on both sides of~\eqref{C011rec}, we find
\[ -N C_{0,1,N}(N) = C_{0,1,N-1}(N-1). \]  Solving the recurrence with the initial condition $C_{0,1,1}(1) = -1$,
yields 
\begin{equation} \label{C01NN} C_{0,1,N} (N) = \frac{(-1)^{N}}{N!}, \end{equation} which is~\eqref{P01N}.
 
Comparing the coefficients of $(x-1)^2$ on both sides of~\eqref{C011rec}, we find, taking into account~\eqref{C01NN},
\begin{equation}  -N C_{0,1,N-1}(N) - \binom N2 \frac{(-1)^N}{N!} =  C_{0,1,N-2}(N-1)  \end{equation}
with initial condition $C_{0,1,1}(2) = -\frac{1}{4}$ yields
\begin{equation} \label{C01N-1N} C_{0,1,N-1}(N) = \frac{(-1)^{N+1}}{4(N-2)!} , \end{equation}
which is~\eqref{P01N-1}.

Comparing the coefficients of $(x-1)^3$ on both sides of~\eqref{C011rec}, we find, taking into account~\eqref{C01NN}
and~\eqref{C01N-1N},
\begin{equation}  -N C_{0,1,N-2}(N) - \binom N2 \frac{(-1)^{N+1}}{4(N-2)!} 
-\binom N3 \frac{(-1)^N}{N!} =  C_{0,1,N-3}(N-1)  \end{equation}
with initial condition $C_{0,1,1}(3) = -\frac{17}{72}$ yields
\begin{equation} \label{C01N-2N} C_{0,1,N-2}(N) = \frac{(-1)^{N}(9N^2 - 13N + 26)}{288(N-2)!} , \end{equation}
which is~\eqref{P01N-2}.
Results for larger $r$ follow analogously.
\end{proof}

\subsection{$C_{1,2,l}(N)$}
Let us now define $\bar{G}_N$ analogously to $G_N$.
Let $$\bar{G}_N := (x+1)^{\lfloor N/2 \rfloor} F_N(x).$$  Then $\bar{G}_N$ has a Taylor series expansion about $x=-1$,
whose first $\lfloor \frac N2 \rfloor$ coefficients are the Rademacher coefficients:
\[ \bar{G}_N = \sum_{r=0}^{\lfloor N/2 \rfloor - 1} C_{1,2,\lfloor N/2 \rfloor - r}(N) (x+1)^r + \mbox{ higher degree terms } .\]
We now abandon the use of the floor function.
Notice that 
\begin{equation} 
   (1-x^{2n-1})(1-x^{2n}) \bar{G}_{2n} = (x+1)\bar{G}_{2n-2}.
\end{equation} 
Thus, by expanding the two left most factors on the left side as a Taylor series about $x=-1$,
\begin{multline} \label{C12even}
  \left\{ \sum_{r=1}^{4n-1} (-1)^{r+1} \left[ \binom{4n-1}{r} + \binom{2n}{r} - \binom{2n-1}{r} \right] (x+1)^r \right\} \\
  \times \left( \sum_{r=0}^{n-1} C_{1,2,n-r}(2n) (x+1)^r + \mbox{ higher degree terms }\right) \\
 = \sum_{r=1}^{n-1} C_{1,2,n-r} (2n-2) (x+1)^r + \mbox{ higher degree terms } 
\end{multline}

By comparing coefficients of $(x+1)^r$ in both sides of ~\eqref{C12even} 
and solving the recurrences, we obtain formulas
for $C_{1,2,n-r}(2n)$
analogous to those for $C_{0,1,N-r}(N)$.

 \begin{equation} \label{C12n2n}
    C_{1,2,n}(2n) = \frac{1}{2^{2n} n!}.
 \end{equation}

 \begin{equation} \label{C12n-12n}
    C_{1,2,n-1}(2n) = \frac{n}{2^{2n} (n-1)!}.
 \end{equation}

 \begin{equation} \label{C12n-22n}
    C_{1,2,n-2}(2n) = \frac{18n^3 - 8n^2 + 15n + 2}{9\cdot 2^{2n+2} (n-1)!}.
 \end{equation}

Of course, the observation 
$ (1-x^{2n})(1-x^{2n+1}) \bar{G}_{2n+1} = (x+1)\bar{G}_{2n-1} $
leads to analogous formulas for the $C_{1,2,n-r}(2n+1)$, e.g.,
\begin{equation}
 C_{1,2,n}(2n+1)= \frac{1}{2^{2n+1} \ n!},
\end{equation}
\begin{equation}
 C_{1,2,n-1}(2n+1)= \frac{2n^2+2n+1}{2^{2n+2}\ n!},
\end{equation}
\begin{equation}
C_{1,2,n-2}(2n+1) = \frac{18n^5+46n^4 + 61n^3 + 53n^2 + 29n + 9}{9\cdot 2^{2n+3}\  (n+1)!},
\end{equation}

Clearly, the same idea can be used to find formulas for $C_{h,k,n-j}(kn+r)$ for any $h$, $k$, $j$, $r$.
This has been implemented in the procedure \texttt{ChkFormula} in the \texttt{HANS} Maple package.
For those desiring automatically generated papers, containing both formulas of this type and their
proofs, please use the \texttt{HansTopDownAutoPaper} procedure in the \texttt{HANS} Maple package.

\section{Close Encounters of the Rademacher Kind}
While it appears that $\lim_{N\to\infty} C_{h,k,l}(N)$ does not exist for any $(h,k,l)$,  
we can nonetheless define  $B_{h,k,l}$ to be the $N$ which comes closest to 
$R_{h,k,l}$.
This is implemented in
the \texttt{CloseEncounters} procedure in \texttt{HANS}.

\begin{center}
\begin{tabular}{| c | c | r | r | } \hline
$l$ & $B_{0,1,l}$ & $\vert C_{0,1,l}( B_{0,1,l} ) - R_{0,1,l} \vert$ &$\vert C_{0,1,l}( B_{0,1,l} ) / 
R_{0,1,l} \vert$ \\
\hline
 $1$ &  $25$ & $0.0003177 $ & $0.99989 $\\
 $2$ &  $47$ & $0.0001434 $ & $0.99924 $\\
 $3$ &  $71$ & $0.0000828 $ & $0.99991 $\\
 $4$ &  $149$ & $0.0000009$ & $1.00001$ \\
 \hline
\end{tabular}
\end{center}

Notice that the first few values of $B_{0,1,l}$ are close to $24l$.  This motivates us to 
consider comparing $C_{0,1, l}(24l)$ to $R_{0,1,l}$.

\begin{center}
\begin{tabular}{| c | l | l | } \hline
$l$ & $\vert C_{0,1,l}(24l) - R_{0,1,l}\vert$ &  $\vert C_{0,1,l}(24l) / R_{0,1,l} \vert $ \\
\hline
 $1$ & $0.0053741095$ & $ 1.018346206$ \\
 $2$ & $0.0015044594$ & $ 1.007927400$ \\
 $3$ & $0.00033240887$ & $ 0.996241370$ \\
 $4$ & $0.00004427030$ & $ 1.001376635$ \\
 $5$ & $0.000011288321$ & 0.9988220859 \\
 $6$ & $0.000001686611$ &1.0006971253\\
 $7$ & $0.0000001275687$ & 0.9997575030\\
 $8$ & $0.0000000110523$ &    1.0000986383 \\
 $9$ & $0.00000000239242$ & 0.9999562770 \\
 $10$ & $0.000000005333208$ & 1.0000141594\\
 $11$ & $0.0000000187490584$ & 0.9999947242\\
 $12$ & $0.0000000393434274$ & 1.0000017401 \\ \hline
 \end{tabular}
 \end{center}
 
 Thus we have some evidence that even though the $N$ for which $C_{0,1,l}(N)$ is closest
 to $R_{0,1,l}$ is not $N=24l$, $C_{0,1,l}(24l)$ seems to provide a good approximation to
 $R_{0,1,l}$, and the approximation seems to be improving as $l$ increases.

\bibliographystyle{amsplain}

\end{document}